\newcommand{\be}{\begin{equation}}
\newcommand{\ee}{\end{equation}}
\newcommand{\beq}{\begin{eqnarray}}
\newcommand{\eeq}{\end{eqnarray}}
\newtheorem{thm}{Theorem}[section]
\newtheorem{lma}{Lemma}[section]
\theoremstyle{remark}
\numberwithin{equation}{section}
\def\tr{\operatorname{tr}}
\def\cR{\mathcal{R}}
\def\be{\begin{equation}}
\def\ee{\end{equation}}
\def\bee{\begin{equation*}}
\def\eee{\end{equation*}}
\newcommand{\ddb}{\partial \ov{\partial}}
\newcommand{\Ric}{\mathrm{Ric}}
\newcommand{\vp}{\varphi}
\def\K{K\"ahler }
\def\KR{K\"ahler-Ricci }
\def\Ric{\text{\rm Ric}}
\def\Rm{\text{\rm Rm}}
\def\p{\partial}
\def\tr{\operatorname{tr}}
\def\e{\varepsilon}
\def\a{{\alpha}}
\def\ddb{\sqrt{-1}\partial\bar\partial}
\begin{document}

\title{K\"ahler tori with almost non-negative scalar curvature}

\author[J. Chu]{Jianchun Chu}
\address[Jianchun Chu]{School of Mathematical Sciences, Peking University, Yiheyuan Road 5, Beijing, P.R.China, 100871}
\email{jianchunchu@math.pku.edu.cn}

\author[M.-C. Lee]{Man-Chun Lee}
\address[Man-Chun Lee]{Department of Mathematics, The Chinese University of Hong Kong, Shatin, N.T., Hong Kong}
\email{mclee@math.cuhk.edu.hk}

\subjclass[2020]{Primary 53E30}

\date{\today}

\begin{abstract}
Motivated by the torus stability problem, in this work we study K\"ahler metrics with almost non-negative scalar curvature on complex torus. We prove that after passing to a subsequence, non-collapsing sequence of K\"ahler metrics with almost non-negative scalar curvature will converge to flat torus weakly.
\end{abstract}

\keywords{complex torus, scalar curvature, stability}

\maketitle

\section{Introduction}

In conformal geometry, one can distinguish manifolds using the Yamabe type \cite{Kobayashi87,Schoen1989}. It is well-known that a compact manifold is of positive Yamabe type if and only if $M$ admits a metric with positive scalar curvature. In the study of positive scalar curvature, one of the celebrated Theorem  was proved by Schoen-Yau \cite{SchoenYau1979,SchoenYau1979-2} for $n\leq7$ and Gromov-Lawson \cite{GromovLawson1980} for general $n$ saying that torus $\mathbb{T}^{n}$ cannot admit metrics with positive scalar curvature. In particular, torus is of vanishing Yamabe type.

Motivated by the torus rigidity, Gromov \cite{Gromov2014} conjectured a stability of metrics with almost non-negative scalar curvature on torus. Namely, if a sequence of metrics $g_i$ has uniform bound on diameter and volume, the scalar curvature is almost non-negative and is non-collapsing in appropriate sense, then after passing to a subsequence it will converge to a flat torus weakly. If the scalar curvature is strengthened to the Ricci curvature, the stability in the Gromov-Hausdorff topology follows from the celebrated work of Colding \cite{Colding1997} using volume lower bound as the non-collapsing condition. When $n=3$, using the Ricci flow method of Hochard \cite{Hochard2016} and Simon-Topping \cite{SimonTopping2017}, a sequence of volume non-collapsed metrics $g_i$ on $\mathbb{T}^3$ with Ricci curvature bounded from below and almost non-negative scalar curvature will converge to a flat torus in the Gromov-Hausdorff topology. On the other hand, if one strengthens the assumption so that $g_i\to g_\infty$ in $C^0$, it was proved by Gromov \cite{Gromov2014} and Bamler \cite{Bamler2016} that $d_{g_\infty}=d_{g_{\mathrm{flat}}}$ if $g_\infty$ is a-priori $C^2$. The $C^2$ assumption was later-on removed by Burkhardt-Guim \cite{Burkhardt2019}. However with only upper bound on diameter and volume, it is possible that the sequence has increasingly many and increasingly thin wells. Hence one cannot expect the Gromov-Hausdorff convergence in general. When $n=3$, Sormani \cite{Sormani2017} proposed using the MinA condition to prevent collapsing. She conjectures the convergence in the volume preserving intrinsic flat sense under the MinA condition. Progresses have been made towards the conjecture of Sormani under various settings, see \cite{Allen2020,AHPPS2019,CAP2020,ChuLee2021} and the references therein. We refer interested readers to the survey article \cite{Sormani2021} of Sormani for detailed exposition. See also \cite{LeeNaberNeumayer2020} for the works on stability in different topologies.

In the above mentioned work, most of the results rely on special choice of gauge. To the best of authors' knowledge, the limiting behaviour is still unclear even if the sequence is non-degenerating in the possible strongest sense, namely $g_i$ are uniformly bi-Lipschitz on $\mathbb{T}^n$, although it is known that the sequence is pre-compact in the Gromov-Hausdorff topology. In complex geometry, bi-Lipschitz \K metrics are usually well-behaved. Motivated by this, in this work we study the problem in the \K case of arbitrary complex dimension. Our main result is as follows:
\begin{thm}\label{Main-Thm}
Let $M^n=\mathbb{T}^{2n}$ be a complex torus and $\omega_h$ be a \K metric on $M$. Suppose $\omega_{i,0}$ is a sequence of \K metrics such that for some $\Lambda>1$ and $p>n$, we have
\begin{enumerate}\setlength{\itemsep}{1.5mm}
    \item[(i)] $\mathrm{Vol}(M,\omega_{i,0})\geq \Lambda^{-1}$;
    \item[(ii)] $\|\tr_{\omega_h}\omega_{i,0}\|_{L^{p}(\omega_{h})} \leq \Lambda$;
    \item[(iii)] $\cR(\omega_{i,0})\geq -i^{-1}$ for all $i\in \mathbb{N}$,
\end{enumerate}
then there is a flat \K metric $\omega_{\infty}$ on $M$ such that after passing to a subsequence, $\omega_{i,0}\to \omega_\infty$ in the sense of current  and
\begin{equation}
\lim_{i\rightarrow+\infty}\|v_{i,0}-1\|_{L^{q/n}(\omega_{\infty})} = 0
\end{equation}
for any $q<p$, where $v_{i,0}=\frac{\omega_{i,0}^{n}}{\omega_{\infty}^{n}}$ denotes the volume function. In particular, we have
\begin{equation}
d\mathrm{vol}_{g_{i,0}}\to d\mathrm{vol}_{g_{\infty}}\quad \text{weakly on }M,
\end{equation}
where $g_{i,0}$ and $g_{\infty}$ denote the associated Riemannian metrics of $\omega_{i,0}$ and $\omega_{\infty}$ respectively. If in addition (ii) holds for $p=+\infty$, then for any $x,y\in M$,
\begin{equation}\label{distance estimate}
\limsup_{i\to +\infty} d_{\omega_{i,0}}(x,y)\leq d_{\omega_\infty}(x,y).
\end{equation}
\end{thm}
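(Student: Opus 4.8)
The plan is to prove the one-sided distance bound \eqref{distance estimate} by exhibiting, for each large $i$, a path from $x$ to $y$ whose $\omega_{i,0}$-length is at most $d_{\omega_\infty}(x,y)+o(1)$. The hypothesis $p=+\infty$ furnishes a uniform bound $\tr_{\omega_h}\omega_{i,0}\le\Lambda$, hence $g_{i,0}\le\Lambda\, g_h$ and $d_{\omega_{i,0}}\le\sqrt{\Lambda}\,d_{\omega_h}$; this controls short connecting segments and shows the measures $g_{i,0}(e,e)\,d\Vol_{g_\infty}$ have uniformly bounded mass. Since $\omega_\infty$ is flat, I would pass to the universal cover and choose linear holomorphic coordinates in which $\omega_\infty$ is the standard Euclidean form, so that the $\omega_\infty$-minimizing geodesic $\gamma_0$ from $x$ to $y$ is a straight segment of Euclidean length $L=d_{\omega_\infty}(x,y)$, with unit tangent $e$ satisfying $g_\infty(e,e)\equiv1$.

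The key device is to average over a family of parallel geodesics, converting a one-dimensional length integral into a full-dimensional integral to which the weak convergence $\omega_{i,0}\to\omega_\infty$ applies. For $z$ in a transverse Euclidean ball $B_r$ of dimension $2n-1$, set $\gamma_z=\gamma_0+z$ and consider
\begin{equation*}
I_i:=\int_{B_r}\mathrm{Length}_{\omega_{i,0}}(\gamma_z)\,dz=\int_{B_r}\int_0^L\sqrt{g_{i,0}(e,e)}\big(\gamma_z(s)\big)\,ds\,dz .
\end{equation*}
Taking $r$ small enough that the tube $\Phi(z,s)=\gamma_0(s)+z$ embeds (a minimizing geodesic is simple, so a thin tubular neighbourhood is embedded), translation-invariance of the flat volume gives $I_i=\int_{T_r}\sqrt{g_{i,0}(e,e)}\,d\Vol_{g_\infty}$ over the solid tube $T_r$, with $|T_r|=L\,|B_r|$. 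By Cauchy--Schwarz,
\begin{equation*}
I_i\le|T_r|^{1/2}\Big(\int_{T_r} g_{i,0}(e,e)\,d\Vol_{g_\infty}\Big)^{1/2}.
\end{equation*}

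Now $g_{i,0}(e,e)$ is a fixed linear combination of the coefficients of $\omega_{i,0}$ in these coordinates, so $g_{i,0}(e,e)\,d\Vol_{g_\infty}$ are nonnegative measures converging weakly-$*$ to $g_\infty(e,e)\,d\Vol_{g_\infty}$ by the current convergence of $\omega_{i,0}$; since the limit is absolutely continuous, the Portmanteau theorem yields $\limsup_i\int_{T_r}g_{i,0}(e,e)\,d\Vol_{g_\infty}\le\int_{T_r}g_\infty(e,e)\,d\Vol_{g_\infty}=|T_r|$. Hence $\limsup_i I_i/|B_r|\le|T_r|/|B_r|=L$. Choosing $z_i\in B_r$ with $\mathrm{Length}_{\omega_{i,0}}(\gamma_{z_i})\le I_i/|B_r|$ and joining $x$ to $x+z_i$ and $y+z_i$ to $y$ by straight connectors of Euclidean length $\le r$ (whose $\omega_{i,0}$-length is therefore $\le\sqrt{\Lambda}\,C_h\,r$, where $C_h$ bounds $g_h$ against the flat metric on the relevant compact region), the triangle inequality gives $\limsup_i d_{\omega_{i,0}}(x,y)\le 2\sqrt{\Lambda}\,C_h\,r+L$; letting $r\to0$ proves \eqref{distance estimate}. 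The main obstacle is precisely the transfer from weak convergence of the full $(1,1)$-currents to one-dimensional path integrals: this is what forces the averaging over the tube, and the use of Cauchy--Schwarz is essential in that it delivers only the upper bound, matching the genuinely one-sided nature of the conclusion (the reverse inequality can fail when thin wells develop).
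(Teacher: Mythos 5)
Your proposal addresses only the final clause \eqref{distance estimate}, and it takes as input precisely the facts that constitute the bulk of the theorem: that a subsequential limit $\omega_\infty$ exists, is a smooth \emph{flat} \K metric, and that $\omega_{i,0}\to\omega_\infty$ in the sense of currents (you invoke ``the current convergence of $\omega_{i,0}$'' to get weak-$*$ convergence of the measures $g_{i,0}(e,e)\,d\Vol_{g_\infty}$). None of this is established in your write-up, nor is the $L^{q/n}$ convergence of the volume functions $v_{i,0}$ or the weak convergence of the volume measures. In the paper these are obtained by a substantial regularization scheme: flat Calabi--Yau reference metrics $\a_i\in[\omega_{i,0}]$ with uniform bounds (Lemma \ref{CY-ref}), running the \KR flow from $\omega_{i,0}$ with uniform potential, volume-form and trace estimates (Lemmas \ref{zeroth-estimate}--\ref{metric-estimate}) so that $\omega_i(t)$ converges smoothly for $t>0$ to a limit which is flat by the preserved scalar curvature lower bound and the strong maximum principle, and then an integration-by-parts argument transferring the conclusion back to $t=0$. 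This is a genuine gap: without that work there is no $\omega_\infty$ in which to measure distances.

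The part you do prove is, however, correct and takes a genuinely different route from the paper's. For \eqref{distance estimate} the paper uses the uniform equivalence $\omega_i(t)\sim\omega_h$ (available when $p=+\infty$), the curvature bound $|\Rm(\omega_i(t))|\leq Lt^{-1}$ of \cite{ShermanWeinkove2012}, and the Simon--Topping distance distortion estimate $d_{\omega_{i,0}}\leq d_{\omega_i(t)}+C(n)\sqrt{Lt}$ of \cite{SimonTopping2016}, letting $i\to+\infty$ and then $t\to0$. Your tube-averaging argument --- writing $g_{i,0}(e,e)\,d\Vol_{g_\infty}=\omega_{i,0}\wedge\eta_e$ for a constant-coefficient positive $(n-1,n-1)$-form $\eta_e$ in flat coordinates, so that current convergence gives weak-$*$ convergence of these nonnegative measures, then applying Fubini, Cauchy--Schwarz and Portmanteau (legitimate here because the limit measure is absolutely continuous, so $\partial T_r$ is null) to find a parallel translate $\gamma_{z_i}$ with $\mathrm{Length}_{\omega_{i,0}}(\gamma_{z_i})\leq L+o(1)$, and closing up with short connectors controlled by the $L^\infty$ trace bound --- is more elementary and entirely decoupled from the Ricci flow. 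The only points needing explicit care are that the tube embeds for small $r$ (true, since a minimizing geodesic is simple and compact) and that the uniform mass bound lets you test the currents against continuous, not just smooth, forms. Were the first two-thirds of the theorem in hand, this would be a nice alternative proof of the distance estimate; as a proof of Theorem \ref{Main-Thm} it is incomplete.
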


The volume assumption is necessary to avoid collapsing while we impose $L^p$ bound to ensure non-expanding. We refer readers to \cite{AllenSormani2020} for many interesting examples along this line. With this, we are able to show that the sequence sub-sequentially converges to some flat metric weakly without assumption on special gauge but the K\"ahlerity. The distance estimate in principle says that potentially shorter path might be built along the convergence but not the other way round under almost non-negative scalar curvature, see \cite{LeeNaberNeumayer2020} for examples in the Riemannian case.
\bigskip

{\it Acknowledgement}:
J. Chu was partially supported by Fundamental Research Funds for the Central Universities (No. 7100603592). The authors would like to thank the referee for the useful comments.

\section{Regularization using Ricci flows}

To prove Theorem~\ref{Main-Thm}, we start with modifying the reference \K form using Calabi-Yau Theorem.
\begin{lma}\label{CY-ref}
Under the assumption of Theorem~\ref{Main-Thm}, there is a sequence $\a_i$ of flat \K  metric inside the same \K class of $\omega_{i,0}$  such that $\a_i=\omega_{i,0}+\ddb u_i$ where
\begin{enumerate}\setlength{\itemsep}{1mm}
\item[(a)] $\displaystyle\sup_M u_i=0$ and $\|u_i\|_{L^\infty}\leq L_1$;
\item[(b)] $L_1^{-1}\omega_h \leq \a_i\leq L_1 \omega_h$ on $M$;
\medskip
\item[(c)] $\log  \frac{\omega_{i,0}^n}{\a_i^n}\geq -L_1 i^{-1/2}$ on $M$
\end{enumerate}
for some $L_{1}>1$ and for all $i\in\mathbb{N}$.
\end{lma}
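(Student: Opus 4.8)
The plan is to take $\a_i$ to be the Ricci-flat representative of the \K class $[\omega_{i,0}]$ furnished by the Calabi--Yau theorem, and then to read off (a)--(c) from uniform pluripotential estimates together with the special structure of the torus. Since $M$ is a complex torus the canonical bundle is trivial and $c_1(M)=0$, so Yau's theorem produces $u_i$ with $\sup_M u_i = 0$ solving $(\omega_{i,0}+\ddb u_i)^n = c_i\,\Omega$, where $\Omega$ is a fixed translation-invariant volume form and $c_i>0$ is the cohomological constant fixed by $c_i\int_M\Omega = \int_M\omega_{i,0}^n = [\omega_{i,0}]^n$. The metric $\a_i := \omega_{i,0}+\ddb u_i$ then has vanishing Ricci form; on a torus a Ricci-flat \K metric is flat, agreeing with the constant-coefficient (translation-invariant) representative of its class, which is the flatness asserted in the statement.

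For (b) I would first show the classes $[\omega_{i,0}]$ are uniformly bounded and uniformly positive. Writing $n\,\omega_{i,0}\wedge\omega_h^{n-1} = (\tr_{\omega_h}\omega_{i,0})\,\omega_h^n$ and applying H\"older with (ii) gives $\int_M\omega_{i,0}\wedge\omega_h^{n-1} = \tfrac1n\int_M(\tr_{\omega_h}\omega_{i,0})\,\omega_h^n \le C\Lambda$, so the pairing of $[\omega_{i,0}]$ with $[\omega_h]^{n-1}$ is bounded; by AM--GM and (ii) again the total volume $\int_M\omega_{i,0}^n$ is bounded above, while (i) bounds it below. Since on the torus $\a_i$ is the constant Hermitian form representing $[\omega_{i,0}]$, these two facts pin its eigenvalues (with respect to a fixed flat metric, equivalently $\omega_h$) into a fixed compact subinterval of $(0,\infty)$: the trace bound controls them from above and the determinant bound from below. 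This yields $L_1^{-1}\omega_h\le\a_i\le L_1\omega_h$.

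For (a) the normalization already gives $\sup_M u_i = 0$, so only $\|u_i\|_{L^\infty}\le L_1$ remains. I would rewrite the equation with $\a_i$ as background, $\omega_{i,0} = \a_i + \ddb(-u_i)$ and $(\a_i+\ddb(-u_i))^n = \omega_{i,0}^n$, which is legitimate precisely because $\a_i$ --- unlike $\omega_{i,0}$ --- is uniformly controlled by (b). By AM--GM and (ii), $\omega_{i,0}^n/\a_i^n$ is bounded in $L^{p/n}(\a_i)$ with $p/n>1$ uniformly in $i$, so Ko{\l}odziej's $L^\infty$ estimate applies with constants depending only on $p/n$, $\Lambda$ and the fixed background, giving a uniform oscillation bound for $u_i$; combined with $\sup_M u_i=0$ this is (a).

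The \emph{main obstacle} is (c), where the scalar curvature hypothesis must be converted into a pointwise lower bound on the volume ratio $F_i := \log(\omega_{i,0}^n/\a_i^n)$. Since $\Ric(\a_i)=0$ we have $\Ric(\omega_{i,0}) = -\ddb F_i$, hence $\cR(\omega_{i,0}) = -\Delta_{\omega_{i,0}}F_i \ge -i^{-1}$; and because $c_1(M)=0$ the total scalar curvature $\int_M\cR(\omega_{i,0})\,\omega_{i,0}^n$ vanishes, so together with (iii) and the volume bound one gets $\|\cR(\omega_{i,0})\|_{L^1(\omega_{i,0})}\le Ci^{-1}$. The difficulty is that $\omega_{i,0}$ may degenerate, so there is no uniform Sobolev or Harnack constant with which to pass from this integral smallness to a pointwise bound. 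This is where the regularization of the present section enters: running the \KR flow from $\omega_{i,0}$, the class $[\omega(t)]=[\omega_{i,0}]$ is preserved and the potential evolves by $\partial_t F = \Delta_{\omega(t)}F = -\cR(\omega(t))$, so $\partial_t F\le i^{-1}$ and hence $F(\cdot,0)\ge F(\cdot,\tau) - \tau i^{-1}$ pointwise; after a short smoothing time $\tau$ the flowed metric has controlled geometry, and on it the integral smallness of $\cR$ (whose minimum stays $\ge -i^{-1}$) together with the fact that $\int_M e^{-F(\tau)}\omega(\tau)^n = \int_M\omega(\tau)^n$, which by Jensen forces the $\omega(\tau)$-average of $F(\tau)$ to be nonnegative, yields $F(\cdot,\tau)\ge -\epsilon(\tau)$. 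Optimizing $\tau$ against the competing errors produces the stated rate $F_i\ge -L_1 i^{-1/2}$. The technical heart, and the step I expect to be hardest, is establishing the uniform smoothing estimates for the flow starting from initial data that is only $L^p$-bounded from above and a priori degenerate.
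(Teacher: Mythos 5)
Your construction of $\alpha_i$ via Yau's theorem and your argument for (b) coincide with the paper's. For (a) you take a genuinely different but legitimate route: the paper proves the linear estimate $\|u\|_{L^\infty}\le C\|\Delta_{\omega_h}u\|_{L^p(\omega_h)}$ for normalized $u$ by a $W^{2,p}$--Sobolev compactness argument and applies it to $\Delta_{\omega_h}u_i=\tr_{\omega_h}(\alpha_i-\omega_{i,0})$, whereas you invoke Ko{\l}odziej's $L^\infty$ estimate for the Monge--Amp\`ere equation with $\alpha_i$ as background and density $\omega_{i,0}^n/\alpha_i^n\in L^{p/n}$, $p/n>1$. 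Both work under the given hypotheses; the paper's is more elementary (purely linear elliptic theory), yours is shorter if one grants Ko{\l}odziej.

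The genuine gap is in (c). Your plan is to run the K\"ahler--Ricci flow, use $\partial_t F\le i^{-1}$ to reduce to a lower bound for $F(\cdot,\tau)$ at a positive smoothing time, and then combine integral smallness of the scalar curvature with ``controlled geometry'' at time $\tau$. But the uniform smoothing estimates you defer --- two-sided equivalence of $\omega_i(\tau)$ with a fixed metric, uniform Harnack/Sobolev constants, and the quantitative rate $\epsilon(\tau)$ --- are exactly what the rest of this section derives, and in the paper's logic they are derived \emph{from} Lemma~\ref{CY-ref}(c): the bound (c) is what gives $\dot\varphi_i\ge -L_3i^{-1/2}$ in Lemma~\ref{time-zeroth-estimate}(a), which is in turn what upgrades the trace estimate of Lemma~\ref{metric-estimate} to two-sided metric control for $t>0$. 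So as written your argument for (c) is circular, or at best incomplete precisely at the step you yourself flag as the hardest. The missing idea is that no parabolic regularization is needed: since $\Ric(\alpha_i)=0$ one has $\Delta_{\omega_{i,0}}\log\frac{\omega_{i,0}^n}{\alpha_i^n}=-\cR(\omega_{i,0})$ and $\Delta_{\omega_{i,0}}u_i=\tr_{\omega_{i,0}}\alpha_i-n$, so applying the maximum principle to the \emph{perturbed} quantity $F_i=\log\frac{\omega_{i,0}^n}{\alpha_i^n}-\delta u_i$ at its minimum point $x_0$ gives $\delta\,\tr_{\omega_{i,0}}\alpha_i\le i^{-1}+n\delta$ there; AM--GM then bounds $\alpha_i^n/\omega_{i,0}^n$ at $x_0$, and since $F_i\ge F_i(x_0)$ everywhere and $\|u_i\|_{L^\infty}\le L_1$ by (a), the choice $\delta=i^{-1/2}$ yields $\log\frac{\omega_{i,0}^n}{\alpha_i^n}\ge -Ci^{-1/2}$. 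This is the He--Zeng trick the paper follows, and it uses only (a), hypothesis (iii) and the Ricci-flatness of $\alpha_i$, with no flow input.
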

\begin{proof}
We may assume $\int_M \omega_h^n=1$ by rescaling.  By the solution to Yau's solution to the Calabi conjecture \cite{Yau1978}, there is $u_i\in C^\infty(M)$ with $\sup_M u_i=0$ such that
\begin{equation}
\a_i := \omega_{i,0}+\ddb u_i
\end{equation}
is a K\"ahler Ricci flat metric on $M$. In particular, integrating by parts shows that for all $i\in \mathbb{N}$, we have
\begin{equation}\label{CY-lowerbound}
\mathrm{Vol}(M,\a_i)=\mathrm{Vol}(M,\omega_{i,0})\geq \Lambda^{-1}
\end{equation}
and
\begin{equation}\label{CY-upperbound}
\begin{split}
\int_M  \a_i\wedge  \omega_h^{n-1}&=\int_M  \omega_{i,0}\wedge \omega_h^{n-1}\\
&= \frac{1}{n}\int_M \tr_{\omega_h} \omega_{i,0}\cdot  \omega_h^{n}\\
&\leq \frac{1}{n}\cdot\|\tr_{\omega_h}\omega_{i,0}\|_{L^{p}(\omega_{h})}\\
&\leq \frac{1}{n}\cdot\Lambda.
\end{split}
\end{equation}
Since $M$ is a complex torus, then the Ricci flat metric $\alpha_{i}$ must be flat. Therefore, \eqref{CY-lowerbound} and \eqref{CY-upperbound} imply that
\begin{equation}\label{CY-equiv}
C_1^{-1}\omega_h\leq \a_i\leq C_1\omega_h
\end{equation}
for some $C_1>1$ and for all $i\in \mathbb{N}$. From this, (b) follows immediately.

\medskip

To see (a), we use  \eqref{CY-equiv} to see that
\begin{equation}
\begin{split}
\|\Delta_{\omega_{h}} u_i\|_{L^p(\omega_h)}&=\|\tr_{\omega_h} (\a_i-\omega_{i,0})\|_{L^p(\omega_h)}\\[1mm]
&\leq \|\tr_{\omega_h} \a_i\|_{L^p(\omega_h)}+\|\tr_{\omega_h} \omega_{i,0}\|_{L^p(\omega_h)}\\[1mm]
&\leq C_2
\end{split}
\end{equation}
for some $C_2>0$ and for all $i\in \mathbb{N}$. We claim that
\begin{equation}\label{claim}
\|u\|_{L^{\infty}} \leq C\|\Delta_{\omega_{h}} u\|_{L^p(\omega_h)}
\end{equation}
for some constant $C$ and all $u\in C^\infty(M)$ with $\sup_{M}u=0$. Then (a) follows from \eqref{claim} immediately. To prove \eqref{claim}, we argue by contradiction. Suppose that \eqref{claim} is not true, then for some $u_i\in C^\infty(M)$ with $\sup_M u_i=0$, we have
\begin{equation}
\|u_{i}\|_{L^{\infty}} > i\,\|\Delta_{\omega_{h}} u_i\|_{L^p(\omega_h)}.
\end{equation}
Write $v_{i}=u_{i}/\|u_{i}\|_{L^{\infty}}$. Recalling $\sup_M u_i=0$,
\begin{equation}\label{sup inf Delta v i}
\sup_{M}v_{i} = 0, \quad
\inf_{M}v_{i} = -1, \quad
\|\Delta_{\omega_{h}} v_i\|_{L^p(\omega_h)} < i^{-1}.
\end{equation}
Using $p>n$, $W^{2,p}$ estimate and Sobolev embedding (note that the real dimension of $M$ is $2n$), and passing to a subsequence again, we may assume
\begin{equation}\label{v i v infty convergence}
v_{i} \rightarrow v_{\infty} \quad \text{in the $C^{0}$ sense},
\end{equation}
where $v_{\infty}$ is a weak solution of the Laplacian equation. The standard elliptic theory shows that $v_{\infty}$ is smooth. By the strong maximum principle, $v_{\infty}$ is a constant function. This contradicts to \eqref{sup inf Delta v i} and \eqref{v i v infty convergence}.

\medskip

It remains to establish the lower bound of the volume form, i.e. (c). Following the argument of \cite[Theorem 1.9]{HeZeng2019}, we consider the function
\begin{equation}
F_i=\log \frac{\omega_{i,0}^n}{\a_i^n}-\delta u_i,
\end{equation}
where $\delta$ is a constant to be determined later. At its minimum $x_0\in M$, we have
\begin{equation}
\begin{split}
0&\leq \Delta_{\omega_{i,0}} F_i\\[1mm]
&=-\cR(\omega_{i,0})-\delta\left(\tr_{\omega_{i,0}}\a_i-n \right)\\[1mm]
&\leq (i^{-1}+n\delta)-\delta\tr_{\omega_{i,0}}\a_i.
\end{split}
\end{equation}
Combining this with AM-GM inequality,
\begin{equation}
\left(\frac{\a_i^n}{\omega_{i,0}^n}\right)^{1/n}\leq \frac1n \tr_{\omega_{i,0}}\a_i \leq 1+\frac{1}{\delta n i} \quad
\text{at $x_0$}.
\end{equation}
Hence for all $x\in M$,
\begin{equation}
\begin{split}
e^{F_i(x)} &\geq e^{F_i(x_0)} \\[1mm]
&=\frac{\omega_{i,0}^n}{\a_i^n}\cdot e^{-\delta u_i}\Big|_{x_0}\\
&\geq \left( 1+\frac{1}{\delta n i} \right)^{-n}\cdot e^{-\delta\|u_{i}\|_{L^{\infty}}}.
\end{split}
\end{equation}
By taking $\delta=i^{-1/2}$ and using the definition of $F_i$,
\begin{equation}
\log \frac{\omega_{i,0}^n}{\a_i^n} \geq -n\log\left(1+\frac{1}{ni^{1/2}}\right)-2i^{-1/2}\|u_{i}\|_{L^{\infty}} \geq -Ci^{-1/2},
\end{equation}
where we have used (a) in the last inequality.
\end{proof}

To obtain the stability, we will make use of the \KR flow to regularize the sequence $\omega_{i,0}$. We let $\omega_i(t)$ be the solution to the \KR flow starting from $\omega_i(0)=\omega_{i,0}$, i.e.,
\begin{equation}
\left\{
\begin{array}{ll}
\frac{\p}{\p t}\omega_{i}(t) = -\Ric(\omega_{i}(t));\\[2mm]
\omega_i(0)=\omega_{i,0}.
\end{array}
\right.
\end{equation}
Since $M$ is a complex torus, then the first Chern class $c_{1}(M)$ is zero. By \cite[Section 1]{TianZhang2006}, it is known that each $\omega_i(t)$ exists for $t\geq0$. We now obtain estimates by re-writing the flow as the parabolic Monge-Amp\`ere equation:
\begin{equation}
\left\{
\begin{array}{ll}
\dot\varphi_i(t)=\displaystyle \log \frac{\omega_i(t)^n}{\a_i^n};\\
\omega_i(t)=\omega_{i,0}+\ddb\varphi_i(t);\\[2mm]
\varphi_i(0)=0,
\end{array}
\right.
\end{equation}
where $\dot\varphi_i=\frac{\p\varphi_i}{\p t}$ and $\a_i$ is the flat metric obtained in Lemma~\ref{CY-ref}.  We begin with the zeroth order estimates of $\varphi_i$.

\begin{lma}\label{zeroth-estimate}
There is $L_2>0$ such that for all $i\in \mathbb{N}$, we have
$$\sup_{M\times [0,+\infty)} |\varphi_i(t)|\leq L_2.$$
\end{lma}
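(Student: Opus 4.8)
The plan is to measure the flow against the flat reference metric $\a_i$ from Lemma~\ref{CY-ref} rather than against $\omega_{i,0}$, and then to run the maximum principle directly on the potential, using that the right-hand side of the Monge-Amp\`ere equation has a definite sign at a spatial extremum.

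Concretely, I would set $\psi_i = \varphi_i - u_i$, so that $\omega_i(t) = \a_i + \ddb \psi_i(t)$. Since $u_i$ is independent of $t$, the equation for $\varphi_i$ transforms into
\[
\dot\psi_i = \log \frac{(\a_i + \ddb \psi_i)^n}{\a_i^n}, \qquad \psi_i(0) = -u_i,
\]
i.e. the K\"ahler-Ricci flow relative to the Ricci-flat metric $\a_i$. By Lemma~\ref{CY-ref}(a) one has $-L_1 \le u_i \le 0$, hence $0 \le \psi_i(0) \le L_1$, and it suffices to show that this two-sided bound persists for all $t \ge 0$: indeed $|\varphi_i| = |\psi_i + u_i| \le 2L_1$ would then give the lemma with $L_2 = 2L_1$.

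To propagate the bound, consider $\Psi(t) = \max_M \psi_i(\cdot, t)$. At a point $x_t$ where the spatial maximum is attained the complex Hessian is negative semidefinite, so $\omega_i(x_t, t) = \a_i + \ddb \psi_i(x_t, t) \le \a_i$; monotonicity of the determinant on positive Hermitian forms then gives $\dot\psi_i(x_t, t) = \log(\omega_i^n / \a_i^n) \le 0$. Hamilton's trick upgrades this to $\Psi'(t) \le 0$ in the barrier sense, whence $\Psi(t) \le \Psi(0) \le L_1$. The symmetric argument at a spatial minimum, where $\ddb \psi_i \ge 0$ forces $\dot\psi_i \ge 0$, shows $\min_M \psi_i(\cdot, t)$ is non-decreasing and therefore stays $\ge 0$. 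Here I use that $\omega_i(t) > 0$ for every $t$ --- guaranteed by the long-time existence quoted above --- so that the determinant comparison and the logarithm are legitimate.

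Notably this estimate depends only on the $L^\infty$ control of $u_i$ from part (a) and is insensitive to the pointwise size of $\omega_{i,0}$; in particular assumption (c) is not needed at this stage. The only point requiring care is the standard justification that $\max_M \psi_i(\cdot, t)$ and $\min_M \psi_i(\cdot, t)$ are locally Lipschitz in $t$ with one-sided time-derivatives controlled by $\dot\psi_i$ at the extremal points, which is a routine application of Hamilton's trick for smooth solutions on the compact manifold $M$ over each finite time interval and extends to all $t \ge 0$ with constants independent of $i$.
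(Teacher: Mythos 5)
Your proof is correct and follows essentially the same route as the paper: both apply the maximum principle to $\varphi_i-u_i$ and exploit that at a spatial maximum $\omega_i(t)\leq \a_i$ forces $\dot\varphi_i\leq 0$ (and symmetrically at a minimum). The only cosmetic difference is that you invoke Hamilton's trick on $\max_M\psi_i$ whereas the paper perturbs by $\pm\e t$ and lets $\e\to 0$.
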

\begin{proof}
To obtain the upper bound of $\varphi_i(t)$, we consider the function $F_i=\varphi_i-u_i-\e t$ for $\e>0$. For any $T>0$, let $(x_{0},t_{0})$ be the maximum point of $F$ on $M\times[0,T]$. By the maximum principle, at $(x_{0},t_{0})$, we have $\ddb \varphi_i\leq \ddb u_i$ and hence
\begin{equation}
\begin{split}
\partial_t F_i&=\dot\varphi_i-\e\\
&= \log \frac{(\omega_{i,0}+\ddb\varphi_i)^n}{\a_i^n}-\e\\
&\leq \log \frac{(\omega_{i,0}+\ddb u_i)^n}{\a_i^n}-\e\\[2mm]
&= -\e,
\end{split}
\end{equation}
where we have used $\alpha_{i}=\omega_{i,0}+\ddb u_i$ in the last equality. This is impossible for $t_0>0$. Then $t_{0}=0$. Combining this with $\varphi_i(0)=0$, we see that
\begin{equation}
\sup_{M\times[0,T]}F_i \leq \sup_{M}F_i(\cdot,0) = -\inf_{M}u_{i}.
\end{equation}
By letting $\e\to 0$ and followed by letting $T\to+\infty$, we obtain the upper bound of $\varphi_i$ using (a) of Lemma~\ref{CY-ref}. The lower bound is similar by applying the minimum principle to the function $G=\varphi_i-u_i+\e t$ for $\e>0$.
\end{proof}

Next, we derive the estimate on the volume form $\dot\varphi_i=\log \frac{\omega_i^n}{\a_i^n}$. This follows from a slight modification of standard argument.
\begin{lma}\label{time-zeroth-estimate}
There is $L_3>0$ such that for all $i\in \mathbb{N}$ and $(x,t)\in M\times [0,+\infty)$, we have
\begin{enumerate}\setlength{\itemsep}{1mm}
\item[(a)] $\dot\varphi_i\geq -L_3i^{-1/2}$;
\item[(b)] $\dot\varphi_i(t)\leq L_3t^{-1}+n$.
\end{enumerate}
\end{lma}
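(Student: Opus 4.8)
The plan is to exploit the fact that the time-derivative $\dot\varphi_i$ itself solves a linear heat equation along the flow. First I would compute its evolution. Since $\alpha_i$ is a fixed flat metric and $\partial_t\omega_i=\ddb\dot\varphi_i$, differentiating $\dot\varphi_i=\log(\omega_i^n/\alpha_i^n)$ in time gives
$$\partial_t\dot\varphi_i=\tr_{\omega_i}(\partial_t\omega_i)=\tr_{\omega_i}(\ddb\dot\varphi_i)=\Delta_{\omega_i}\dot\varphi_i,$$
so that $\dot\varphi_i$ satisfies $(\partial_t-\Delta_{\omega_i(t)})\dot\varphi_i=0$. Equivalently, since $c_1(M)=0$ and $\alpha_i$ is flat, one has $\Ric(\omega_i)=-\ddb\dot\varphi_i$, which is exactly consistent with the flow. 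This identity is the structural fact on which everything rests.

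With this in hand, (a) is immediate. At $t=0$ we have $\dot\varphi_i(\cdot,0)=\log(\omega_{i,0}^n/\alpha_i^n)\geq -L_1 i^{-1/2}$ by Lemma~\ref{CY-ref}(c). Because $\dot\varphi_i$ solves the heat equation on the closed manifold $M$, the parabolic minimum principle (applied to $\dot\varphi_i+\epsilon t$ and letting $\epsilon\to0$) forces the minimum over $M\times[0,T]$ to be attained at $t=0$ for every $T$; hence $\dot\varphi_i\geq -L_1 i^{-1/2}$ on all of $M\times[0,+\infty)$, which is (a).

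For (b) I would introduce the auxiliary function $\Phi:=t\dot\varphi_i-\varphi_i-nt$ and show it is a subsolution. Using $\partial_t\varphi_i=\dot\varphi_i$, the heat equation for $\dot\varphi_i$, and the identity $\Delta_{\omega_i}\varphi_i=\tr_{\omega_i}(\omega_i-\omega_{i,0})=n-\tr_{\omega_i}\omega_{i,0}$, a direct computation yields
$$(\partial_t-\Delta_{\omega_i})\Phi=-\tr_{\omega_i}\omega_{i,0}\leq 0.$$
The subtracted $nt$ is precisely what cancels the $n$ coming from $\Delta_{\omega_i}\varphi_i$; without it the quantity fails to be a subsolution. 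The parabolic maximum principle then gives $\max_{M\times[0,T]}\Phi=\max_M\Phi(\cdot,0)=0$, since $\varphi_i(0)=0$. Rearranging $t\dot\varphi_i-\varphi_i-nt\leq 0$ and using the upper bound $\varphi_i\leq L_2$ from Lemma~\ref{zeroth-estimate} gives $t\dot\varphi_i\leq L_2+nt$, that is $\dot\varphi_i\leq L_2 t^{-1}+n$. Taking $L_3:=\max\{L_1,L_2\}$ then covers both parts simultaneously.

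All the computations are elementary. The only point needing care is the justification of the maximum/minimum principle for the heat operator with the time-dependent metric $\Delta_{\omega_i(t)}$; since $M$ is closed and each $\omega_i(t)$ is a genuine Kähler metric (so $\Delta_{\omega_i(t)}$ is uniformly elliptic on each compact time interval), this is handled by the standard $\pm\epsilon t$ barrier argument. The genuine content—and the thing one must not overlook—is the choice of the subsolution $t\dot\varphi_i-\varphi_i-nt$, which produces exactly the $t^{-1}$ blow-up rate in (b); the ``slight modification of the standard argument'' is just that the initial lower bound feeding into (a) comes from Lemma~\ref{CY-ref}(c) and hence degenerates like $i^{-1/2}$ rather than being a fixed constant.
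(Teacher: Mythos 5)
Your argument is correct and follows essentially the same route as the paper: part (a) via the heat equation for $\dot\varphi_i$ and the minimum principle with the initial bound from Lemma~\ref{CY-ref}(c), and part (b) via the maximum principle applied to a test function of the form $t\dot\varphi_i-\varphi_i+\cdots-nt$. The only (cosmetic) difference is that the paper subtracts $\varphi_i-u_i$ so that the Laplacian term produces $-\tr_{\omega_i(t)}\alpha_i$, whereas you subtract $\varphi_i$ and get $-\tr_{\omega_i(t)}\omega_{i,0}$; both are nonpositive, so both choices work.
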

\begin{proof}
We begin with the lower bound of $\dot{\vp}_{i}$ as it is relatively simpler. By differentiating $\dot\varphi_i$ with respect to time $t$, we have
\begin{equation}\label{dot varphi i}
\left(\frac{\partial}{\partial t}-\Delta_{\omega_i(t)}\right) \dot\varphi_i(t)=0
\end{equation}
and hence the minimum principle implies
\begin{equation}
\inf_{M\times[0,+\infty)}\dot\varphi_i(x,t)\geq \inf_M \dot\varphi_i(0)
=\inf_{M}\log\frac{\omega_{i,0}^{n}}{\alpha_{i}^{n}} \geq -L_1i^{-1/2},
\end{equation}
by using (c) of Lemma~\ref{CY-ref}.

\medskip

To obtain the upper bound of $\dot{\vp}_{i}$ for $t>0$, we consider the function
\begin{equation}
F_i=t\dot\varphi_i-\varphi_i+u_i-nt.
\end{equation}
It is clear that
\begin{equation}\label{Delta vp-u}
\Delta_{\omega_{i}(t)}(\vp_{i}-u_{i})
= \tr_{\omega_i(t)}(\omega_{i}(t)-\alpha_{i})
= n-\tr_{\omega_i(t)}\a_i.
\end{equation}
Combining this with \eqref{dot varphi i}, we have
\begin{equation}
\begin{split}
\left(\frac{\partial}{\partial t}-\Delta_{\omega_i(t)}\right) F_i
&=\dot\varphi_i-\dot\varphi_i+\Delta_{\omega_i(t)}\vp_{i}-\Delta_{\omega_i(t)}u_{i}-n\\
&=-\tr_{\omega_i(t)} \a_i\\[2mm]
&\leq 0.
\end{split}
\end{equation}
Then the required estimate follows from the maximum principle, (a) of Lemma~\ref{CY-ref} and Lemma \ref{zeroth-estimate}.
\end{proof}

We will show that $\omega_i(t)$ is compact in the $C^\infty_{\mathrm{loc}}$ topology. To do this, we need to obtain an upper and lower bounds of $\omega_{i}(t)$ for $t>0$.

\begin{lma}\label{metric-estimate}
There is $L_4>0$ such that for all $i\in \mathbb{N}$ and $(x,t)\in M\times (0,2]$, we have
\begin{equation}\label{tr alpha omega i}
\tr_{\a_i}\omega_i(t)\leq L_4 t^{1-n} e^{\dot\varphi_i(t)}.
\end{equation}
In particular, $\omega_i(t)$ is uniformly equivalent to $\omega_h$ for $t\in(0,2]$, i.e.,
\begin{equation}\label{omega h omega i}
e^{-C_{4}t^{-1}}\cdot\omega_{h} \leq \omega_i(t) \leq e^{C_{4}t^{-1}}\cdot\omega_{h}
\end{equation}
for some $C_{4}>0$ and for all $i\in\mathbb{N}$.
\end{lma}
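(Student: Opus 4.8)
The plan is to prove the pointwise bound \eqref{tr alpha omega i} by a single application of the parabolic maximum principle, and then to deduce the two-sided metric equivalence \eqref{omega h omega i} from it by elementary eigenvalue estimates together with the volume bounds already in hand.

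First I would record the three differential (in)equalities that drive the argument. Since $\a_i$ is flat (Lemma~\ref{CY-ref}), the parabolic Schwarz lemma gives $\lf(\frac{\p}{\p t}-\Delta_{\omega_i(t)}\ri)\log\tr_{\a_i}\omega_i\le 0$, the vanishing holomorphic bisectional curvature of $\a_i$ being exactly what kills the reaction term. By \eqref{dot varphi i} we have $\lf(\frac{\p}{\p t}-\Delta_{\omega_i(t)}\ri)\dot\vp_i=0$, and by \eqref{Delta vp-u}, writing $\psi_i=\vp_i-u_i$, we get $\lf(\frac{\p}{\p t}-\Delta_{\omega_i(t)}\ri)\psi_i=\dot\vp_i-n+\tr_{\omega_i(t)}\a_i$. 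I would then run the maximum principle on $M\times(0,T]$ (any $T\le 2$) for the test function $Q=\log\tr_{\a_i}\omega_i-\dot\vp_i+(n-1)\log t-A\psi_i$ with $A>0$ fixed. Combining the three facts yields $\lf(\frac{\p}{\p t}-\Delta_{\omega_i(t)}\ri)Q\le \frac{n-1}{t}-A\lf(\dot\vp_i-n+\tr_{\omega_i(t)}\a_i\ri)$. For fixed $i$ the metric is smooth up to $t=0$, so $Q\to-\infty$ as $t\to0^+$ and the supremum is attained at an interior point $(x_0,t_0)$ with $t_0>0$; there $\lf(\frac{\p}{\p t}-\Delta_{\omega_i(t)}\ri)Q\ge0$, which forces $\tr_{\omega_i(t_0)}\a_i\le \frac{n-1}{At_0}+n-\dot\vp_i\le \frac{n-1}{At_0}+n+L_1$, using $\dot\vp_i\ge -L_1 i^{-1/2}$ from Lemma~\ref{time-zeroth-estimate}.

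The crucial point is the conversion back to the forward trace. Diagonalising $\omega_i$ with respect to $\a_i$ with eigenvalues $\lambda_1,\dots,\lambda_n>0$, one has $\tr_{\a_i}\omega_i=\sum_j\lambda_j$, $\tr_{\omega_i(t)}\a_i=\sum_j\lambda_j^{-1}$ and $e^{\dot\vp_i}=\omega_i^n/\a_i^n=\prod_j\lambda_j$, so that $\tr_{\a_i}\omega_i\cdot e^{-\dot\vp_i}=\sum_j\prod_{k\ne j}\lambda_k^{-1}=e_{n-1}(\lambda_1^{-1},\dots,\lambda_n^{-1})$. By Maclaurin's inequality this is $\le n^{2-n}\lf(\tr_{\omega_i(t)}\a_i\ri)^{n-1}$. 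Feeding in the bound on $\tr_{\omega_i(t_0)}\a_i$, the $\log$ of the forward-trace factor is $\le(2-n)\log n+(n-1)\log\tr_{\omega_i(t_0)}\a_i$, and adding $(n-1)\log t_0$ makes the singular term $\frac{n-1}{At_0}$ combine with $(n-1)\log t_0$: indeed $Q(x_0,t_0)\le (n-1)\log\!\lf(\frac{n-1}{A}+(n+L_1)t_0\ri)+(2-n)\log n+A\|\psi_i\|_{L^\infty}$. Since $t_0\le 2$ and $\|\psi_i\|_{L^\infty}\le L_1+L_2$ uniformly (Lemmas~\ref{CY-ref} and \ref{zeroth-estimate}), the right-hand side is a constant independent of $i$, and exponentiating $\sup Q\le C$ gives \eqref{tr alpha omega i}.

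Finally, \eqref{omega h omega i} follows by feeding \eqref{tr alpha omega i} back into the same eigenvalue picture. The upper volume bound $\dot\vp_i\le L_3 t^{-1}+n$ of Lemma~\ref{time-zeroth-estimate} gives $\lambda_{\max}\le \tr_{\a_i}\omega_i\le L_4 t^{1-n}e^{L_3 t^{-1}+n}\le e^{Ct^{-1}}$ on $(0,2]$, which after comparing $\a_i$ with $\omega_h$ via Lemma~\ref{CY-ref}(b) yields the upper estimate; for the lower estimate one uses $\prod_j\lambda_j=e^{\dot\vp_i}\ge e^{-L_1}$ together with the upper bound on each $\lambda_j$ to get $\lambda_{\min}\ge e^{-L_1}\lambda_{\max}^{-(n-1)}\ge e^{-Ct^{-1}}$. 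The main obstacle is the construction in the third paragraph: the combination $(n-1)\log t-A\psi_i$ must be chosen so precisely that, after the Maclaurin step, the interior-maximum contribution $\frac{1}{t_0}$ cancels against $(n-1)\log t_0$ and leaves a bound \emph{uniform in $i$}; by contrast the Schwarz inequality is routine once the flatness of $\a_i$ is invoked.
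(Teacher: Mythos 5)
Your proposal is correct and follows essentially the same route as the paper: the same test function $\log\tr_{\a_i}\omega_i+(n-1)\log t-\dot\vp_i-A(\vp_i-u_i)$, the parabolic Schwarz lemma with flat target, the elementary-symmetric-function conversion from $\tr_{\omega_i}\a_i$ back to $\tr_{\a_i}\omega_i$, and the same eigenvalue argument for \eqref{omega h omega i}. (In fact your sign on the $A(\vp_i-u_i)$ term is the internally consistent one — the paper's \eqref{test1} has a sign typo there — and your use of Maclaurin's inequality merely sharpens the constant in the paper's cruder bound $e_{n-1}\le e_1^{n-1}$.)
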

\begin{proof}
We first show how to use \eqref{tr alpha omega i} to deduce \eqref{omega h omega i}. By (b) of Lemma \ref{time-zeroth-estimate},
\begin{equation}\label{trace alpha omega}
\tr_{\alpha_{i}}\omega_{i} \leq L_{4}t^{1-n}e^{L_{3}t^{-1}+n} \leq e^{Ct^{-1}}.
\end{equation}
Combining this with the following elementary inequality
\begin{equation}
\tr_{\omega_{i}}\a_i \leq \left(\frac{\a_{i}^{n}}{\omega_{i}^{n}}\right)\cdot(\tr_{\a_i}\omega_i)^{n-1}
= e^{-\dot{\vp}_{i}}\cdot(\tr_{\a_i}\omega_i)^{n-1}
\end{equation}
and (a) of Lemma \ref{time-zeroth-estimate}, we have
\begin{equation}\label{trace omega alpha}
\tr_{\omega_{i}}\a_i \leq e^{L_{3}+(n-1)Ct^{-1}}.
\end{equation}
Then \eqref{omega h omega i} follows from  \eqref{trace alpha omega}, \eqref{trace omega alpha} and (b) of Lemma \ref{CY-ref}.

\medskip

It suffices to prove \eqref{tr alpha omega i}. When $n=1$, \eqref{tr alpha omega i} is trivial as $\tr_{\a_i}\omega_i=e^{\dot{\vp}}$. We may assume $n\geq2$. We consider the following function
\begin{equation}\label{test1}
F_{i}(t) = \log \tr_{\a_i}\omega_i(t)+(n-1)\log t+\Lambda\cdot\left(\varphi_i(t)-u_i \right)-\dot\varphi_i(t),
\end{equation}
where $\Lambda$ is a constant to be determined later. 
Let $(x_{0},t_{0})$ be the maximum point of $F_i$ on $M\times[0,2]$. Since $F_i\to -\infty$ as $t\to 0$, we have $t_{0}>0$. At $(x_{0},t_{0})$, by the standard parabolic Schwarz's lemma (see e.g. \cite[Proposition 3.2.4]{SongWeinkove2013}) and the fact that $\a_i$ is flat, we have
\begin{equation}
\left(\frac{\partial}{\partial t}-\Delta_{\omega_i(t_{0})}\right) \log \tr_{\a_i}\omega_i(t_{0}) \leq 0.
\end{equation}
Combining this with \eqref{dot varphi i} and \eqref{Delta vp-u},
\begin{equation}
\left(\frac{\partial}{\partial t}-\Delta_{\omega_i(t_{0})}\right) F_i(t_{0})\leq (n-1) t_{0}^{-1} +\Lambda\cdot\left(\dot\vp_i(t_{0})+n-\tr_{\omega_i(t_{0})}\a\right).
\end{equation}
Applying the maximum principle at $(x_0,t_0)$,
\begin{equation}
\tr_{\omega_i(t_{0})}\a_i \leq (n-1)\Lambda^{-1}t_{0}^{-1}+\dot\vp_i(t_{0})+n.
\end{equation}
By (b) of Lemma~\ref{time-zeroth-estimate}, at $(x_0,t_0)$ we have
\begin{equation}
\begin{split}
\tr_{\a_i}\omega_i(t_{0})  &\leq \left(\frac{\omega_i(t_{0})^n}{\a_i^n}\right) \cdot (\tr_{\omega_i(t_0)} \a_i)^{n-1}\\[1mm]
&\leq e^{\dot\varphi_i(t_{0})} \cdot \left(((n-1)\Lambda^{-1}+L_{3})t_{0}^{-1}+2n\right)^{n-1},
\end{split}
\end{equation}
which implies
\begin{equation}
\log\tr_{\a_i}\omega_i(t_{0}) \leq \dot\varphi_i(t_{0})-(n-1)\log t_{0}+C(n,L_3,\Lambda).
\end{equation}
Choosing $\Lambda=1$ and using the definition of $F$, we conclude that
\begin{equation}
\sup_{M\times [0,2]} F_i = F_i(x_{0},t_{0}) \leq C(n,L_3)+\sup_{M\times [0,2]}|\vp_i-u_i|.
\end{equation}
Combining this with (a) of Lemma \ref{CY-ref} and Lemma \ref{zeroth-estimate}, we have
\begin{equation}
\sup_{M\times[0,2]}\left(e^{-\dot\varphi_i}t^{n-1}\tr_{\a_i}\omega_i\right) \leq C(n,L_1,L_2,L_3).
\end{equation}
\end{proof}

\section{Proof of Main Theorem}

Now we are ready to prove Theorem~\ref{Main-Thm}.
\begin{proof}[Proof of Theorem~\ref{Main-Thm}]
For the smooth solution $\omega_{i}(t)$ of \KR flow , it is well-known that (see e.g. \cite[(3.56)]{SongWeinkove2013})
\begin{equation}
\left(\frac{\partial}{\partial t}-\Delta_{\omega_i(t)}\right) \cR(\omega_i(t)) \geq 0.
\end{equation}
Then $\cR(\omega_i(0))=\cR(\omega_{i,0})\geq-i^{-1}$ and the maximum principle imply
\begin{equation}\label{Scalar-bound}
\cR(\omega_i(t))\geq -i^{-1}
\end{equation}
for all $i\in \mathbb{N}$ and $(x,t)\in M\times [0,+\infty)$. By Lemma~\ref{metric-estimate} and \cite[Theorem 1]{ShermanWeinkove2012}, $\omega_i(t)$ is bounded in $C^k_{\mathrm{loc}}$ for all $k\in \mathbb{N}$ for $t\in(0,2]$. After passing to subsequence, we may assume that $\omega_i(t)\to \omega_\infty(t)$ in $C^\infty_{\mathrm{loc}}(M\times(0,2])$ as $i\to +\infty$.  Moreover, for any $t\in(0,2]$, $\cR(\omega_\infty(t))\geq 0$ by \eqref{Scalar-bound}.

Let $\hat{\omega}$ be a K\"ahler Ricci flat metric on $M$. Since $\omega_\infty(t)$ is K\"ahler, the non-negativity of scalar curvature implies that
\begin{equation}
\Delta_{\omega_{\infty}(t)}\left(\log\frac{\omega_\infty(t)^n}{\hat \omega^n}\right)
= -\cR(\omega_\infty(t)) \leq 0.
\end{equation}
By the strong maximum principle, the function $\displaystyle\log \frac{\omega_\infty(t)^n}{\tilde \omega^n}$ is constant, and then $\omega_\infty(t)$ is Ricci flat. As $M$ is a complex torus, $\omega_\infty(t)$ must be flat. Moreover, as $\omega_\infty(t)$ is a smooth solution to the \KR flow, uniqueness implies that $\omega_\infty(t)$ is independent of time $t$, i.e., $\omega_\infty(t)\equiv \tilde\omega_{\infty}$ for $t\in(0,2]$ for some flat \K metric $\tilde\omega_{\infty}$ on $M$.

It remains to show that $\omega_{i,0}$ converges to $\tilde\omega_{\infty}$ weakly. We first show the convergence in current sense. Since $\omega_{i,0}$ and $\a_i$ are in the same \K class, by Lemma~\ref{CY-ref} and the Banach-Alaoglu Theorem \cite[Chapter III Proposition 1.23]{Demailly}, we may assume that $\omega_{i,0}$ converge weakly to a \K current $\omega_\infty$. It suffices to show that $\omega_\infty\equiv \tilde\omega_\infty$ as a current.

Let $\eta$ be an arbitrary $(n-1,n-1)$ test form on $M$. Using integration by parts,
\begin{equation}\label{n-1 test}
\begin{split}
\int_M \eta \wedge  \omega_{i}(1)&=\int_M \eta \wedge\left(\omega_{i,0}+\ddb \varphi_i(1) \right)\\
&=\int_M \eta \wedge \omega_{i,0}+\int_M \varphi_i(1)\cdot \ddb \eta \\
&=\int_M \eta \wedge \omega_{i,0}+\mathbf{E}.
\end{split}
\end{equation}
Since $\eta$ is a fixed $(n-1,n-1)$ form on $M$, there is $C_{\eta}>0$ such that
\begin{equation}
|\ddb \eta|\leq C_{\eta}\cdot\omega_h^{n}.
\end{equation}
We will show that $\mathbf{E}\to 0$ as $i\to +\infty$. Using $\varphi_i(0)=0$, (b) of Lemma~\ref{CY-ref} and almost non-negativity of $\dot\varphi_i$ from (a) of Lemma~\ref{time-zeroth-estimate}, we have
\begin{equation}
\begin{split}
|\mathbf{E}|
&=\left|\int^1_0 \left(\int_M \dot\varphi_i(t) \cdot \ddb \eta \right) dt\right|\\
&\leq C \int^1_0 \left(\int_M |\dot\varphi_i(t)|  \a_i^n \right)dt\\
&\leq C \int^1_0 \left(\int_M \dot\varphi_i(t)\, \a_i^n\right)  dt+Ci^{-1/2}.
\end{split}
\end{equation}
Using the elementary inequality $x\leq e^x-1$ for all $x\in \mathbb{R}$, $\dot\varphi_i=\log \frac{\omega_i^n}{\a_i^n}$ and the fact that $\omega_i(t)$ and $\a_i$ are in the same class,
\begin{equation}
\begin{split}
|\mathbf{E}|
&\leq C_\eta \int^1_0 \left(\int_M \left(e^{\dot\varphi_i(t)}-1\right) \a_i^n\right)  dt+Ci^{-1/2}\\
&= C_\eta \int^1_0 \left(\int_M\omega_{i}(t)^n-\int_M \a_i^n\right) \;  dt+Ci^{-1/2}\\[1mm]
&=Ci^{-1/2}.
\end{split}
\end{equation}
By letting $i\to +\infty$ in \eqref{n-1 test}, we have
\begin{equation}
\int_M \eta\wedge \tilde\omega_\infty = \int_M \eta \wedge\omega_\infty
\end{equation}
for any test form $\eta$ on $M$. Hence, $\omega_\infty=\tilde\omega_\infty$ and is smooth on $M$.

Thanks to \eqref{Scalar-bound}, we have (see e.g. \cite[(3.65)]{SongWeinkove2013})
\begin{equation}
\frac{\p}{\p t}\left(\log\frac{\omega_{i}(t)^{n}}{\omega_{i,0}^{n}}\right) = -\cR(\omega_i(t)) \leq i^{-1}.
\end{equation}
Then for all $i\in \mathbb{N}$, we have
\begin{equation}\label{Vform-lowerbound}
e^{-1/i}\cdot \omega_i(1)^n \leq \omega_{i,0}^n.
\end{equation}
On the other hand, by the fact that $\omega_i(1)$ and $\omega_{i,0}$ are in the same class,
\begin{equation}\label{same volume}
\int_M \omega_{i}(1)^n=\int_M \omega_{i,0}^n.
\end{equation}
Let $f_{i}=\frac{\omega_{i}(1)^{n}}{\omega_{\infty}^{n}}$. Since $v_{i,0}=\frac{\omega_{i,0}^{n}}{\omega_{\infty}^{n}}$, then \eqref{Vform-lowerbound} and \eqref{same volume} show
\begin{equation}
e^{-1/i}\cdot f_{i} \leq v_{i,0}, \quad
\int_{M}f_{i}\,\omega_{\infty}^{n} = \int_{M}v_{i,0}\,\omega_{\infty}^{n}.
\end{equation}
We compute
\begin{equation}
\begin{split}
\int_{M}|v_{i,0}-f_{i}|\,\omega_{\infty}^{n}
&\leq \int_{M}|v_{i,0}-e^{-1/i}\cdot f_{i}|\,\omega_{\infty}^{n}+(1-e^{-1/i})\int_{M}|f_{i}|\,\omega_{\infty}^{n} \\
&=\int_{M}(v_{i,0}-e^{-1/i}\cdot f_{i})\,\omega_{\infty}^{n}+(1-e^{-1/i})\int_{M}f_{i}\,\omega_{\infty}^{n}\\
&=2(1-e^{-1/i})\int_{M}f_{i}\,\omega_{\infty}^{n}.
\end{split}
\end{equation}
Since $\omega_{i}(1)\rightarrow\omega_{\infty}$ in the $C^{\infty}$ sense, then $f_{i}\rightarrow1$ in the $C^{\infty}$ sense. By letting $i\rightarrow+\infty$, we obtain
\begin{equation}
\lim_{i\rightarrow+\infty}\|v_{i,0}-1\|_{L^{1}(\omega_{\infty})} = 0.
\end{equation}
In particular, $v_{i,0}$ converges to $1$ point-wise almost everywhere on $M$. By assumption (ii) and AM-GM inequality, we obtain
\begin{equation}
\begin{split}
\int_{M}v_{i,0}^{p/n}\,\omega_{\infty}^{n}
&\leq C(n,\omega_{h},\omega_{\infty})\cdot\int_{M}\left(\frac{\omega_{i,0}^{n}}{\omega_{h}^{n}}\right)^{p/n}\,\omega_{h}^{n}\\
&\leq C(n,\omega_{h},\omega_{\infty})\cdot\int_M (\tr_{\omega_h} \omega_{i,0})^p \omega_h^n \\[2mm]
&\leq C(n,\omega_{h},\omega_{\infty})\cdot\Lambda^{p}.
\end{split}
\end{equation}
Then the $L^{q/n}$ convergence follows from the interpolation inequality.

\medskip

If $p=+\infty$ in (ii), then (b) and (c) of Lemma~\ref{CY-ref} imply that $\omega_{i,0}$ are uniformly equivalent to the fixed metric $\omega_h$, i.e.,
\begin{equation}
C_1^{-1}\omega_h \leq \omega_{i,0} \leq C_1\omega_h
\end{equation}
for some $C_{1}>1$ and for all $i\in \mathbb{N}$. The standard argument of \KR flow (see e.g. \cite[Corollary 3.3.5]{SongWeinkove2013}) shows that for all $i\in\mathbb{N}$ and $(x,t)\in M\times[0,2]$,
\begin{equation}
C_2^{-1}\omega_h \leq \omega_{i}(t) \leq C_2\omega_h
\end{equation}
for some $C_{2}>1$. It follows from \cite[Theorem 1.1]{ShermanWeinkove2012} that $|\Rm(\omega_i(t))|\leq Lt^{-1}$ on $M\times(0,2]$ for some $L>0$. By \cite[Corollary 3.3]{SimonTopping2016}, for all $x,y\in M$ and $t\in [0,1]$,
\begin{equation}\label{distance estimate t}
d_{\omega_{i,0}}(x,y)\leq  d_{\omega_{i}(t)}(x,y)+C(n)\sqrt{Lt}.
\end{equation}
Recall that $\omega_i(t)\to \omega_\infty$ in $C^\infty_{\mathrm{loc}}(M\times(0,2])$ as $i\to +\infty$. In \eqref{distance estimate t}, by letting $i\to+\infty$ and followed by letting $t\to0$, we obtain the required distance estimate \eqref{distance estimate}.
\end{proof}

As an application of the method, the following stability is also proved along the same line.
\begin{thm}
Let $(M,\omega_h)$ be a compact \K manifold with $c_1(M)=0$. Suppose $\omega_{i,0}=\a_i+\ddb u_i$ is a sequence of \K metrics such that for some $\Lambda>0$ and $p>n$, we have
\begin{enumerate}\setlength{\itemsep}{1mm}
    \item[(i)] $\Lambda^{-1}\omega_h\leq \a_i\leq \Lambda \omega_h$;
    \item[(ii)] $\|\tr_{\omega_h}\omega_{i,0}\|_{L^{p}(\omega_{h})} \leq \Lambda$;
    \item[(iii)] $\cR(\omega_i)\geq -i^{-1}$ for all $i\in\mathbb{N}$,
\end{enumerate}
then there is a K\"ahler Ricci flat metric $\omega_{\infty}$ on $M$ such that after passing to a subsequence, $ \omega_{i,0}\to \omega_\infty$ in the sense of current  and
\begin{equation}
\lim_{i\rightarrow+\infty}\|v_{i,0}-1\|_{L^{q/n}(\omega_{\infty})} = 0
\end{equation}
for any $q<p$, where $v_{i,0}=\frac{\omega_{i,0}^{n}}{\omega_{\infty}^{n}}$ denotes the volume function. In particular, we have
\begin{equation}
d\mathrm{vol}_{g_{i,0}}\to d\mathrm{vol}_{g_{\infty}}\quad \text{weakly on }M,
\end{equation}
where $g_{i,0}$ and $g_{\infty}$ denote the associated Riemannian metrics of $\omega_{i,0}$ and $\omega_{\infty}$ respectively. If in addition (ii) holds for $p=+\infty$, then for any $x,y\in M$,
\begin{equation}
\limsup_{i\to +\infty} d_{\omega_{i,0}}(x,y)\leq d_{\omega_\infty}(x,y).
\end{equation}
\end{thm}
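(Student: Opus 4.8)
The plan is to mimic the proof of Theorem~\ref{Main-Thm} step by step, replacing the flat reference metrics $\alpha_i$ (which were available only because a complex torus carries flat K\"ahler metrics) by the Ricci-flat representatives of the K\"ahler classes $[\omega_{i,0}]$. Since $c_1(M)=0$, Yau's theorem \cite{Yau1978} furnishes, in each class $[\omega_{i,0}]$, a unique K\"ahler Ricci-flat metric $\tilde\alpha_i$; writing $\tilde\alpha_i=\omega_{i,0}+\ddb u_i$ with $\sup_M u_i=0$, I would first re-establish the analogue of Lemma~\ref{CY-ref}. The $L^\infty$ bound on $u_i$ follows verbatim from the contradiction argument there, which only invokes the fixed background $(M,\omega_h)$, the $L^p$ control of $\Delta_{\omega_h}u_i=\tr_{\omega_h}(\tilde\alpha_i-\omega_{i,0})$ supplied by (ii) and the two-sided bound on $\tilde\alpha_i$, and the strong maximum principle. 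The volume lower bound $\log\frac{\omega_{i,0}^n}{\tilde\alpha_i^n}\geq -L_1 i^{-1/2}$ is identical to part (c): the He--Zeng maximum-principle computation uses only $\cR(\omega_{i,0})\geq -i^{-1}$ together with $\Ric(\tilde\alpha_i)=0$.

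The one genuinely new input is a uniform two-sided bound together with uniform curvature control on the Ricci-flat references $\tilde\alpha_i$. By (i) the classes $[\omega_{i,0}]=[\alpha_i]$ are squeezed between $\Lambda^{-1}[\omega_h]$ and $\Lambda[\omega_h]$, so they lie in a bounded region of the K\"ahler cone in the finite-dimensional space $H^{1,1}(M,\mathbb{R})$; after passing to a subsequence they converge to a K\"ahler class, and one may pick smooth K\"ahler representatives $\beta_i\to\beta_\infty$ in $C^\infty$. The Calabi--Yau equation $\tilde\alpha_i^n=c_i\,\Omega$, with $\Omega$ the fixed Ricci-flat volume form and $c_i=\int_M\beta_i^n/\int_M\Omega\to c_\infty$, then has smoothly converging data, so Yau's a-priori estimates give $\tilde\alpha_i\to\tilde\alpha_\infty$ in $C^\infty$. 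In particular $L_1^{-1}\omega_h\leq\tilde\alpha_i\leq L_1\omega_h$ and the bisectional curvatures of $\tilde\alpha_i$ are bounded below uniformly in $i$.

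With these references in place the flow estimates transfer. Lemmas~\ref{zeroth-estimate} and~\ref{time-zeroth-estimate} use only the Ricci-flatness of the reference and go through unchanged. The sole place where flatness was used in Lemma~\ref{metric-estimate} is the parabolic Schwarz lemma; for the merely Ricci-flat $\tilde\alpha_i$ it now reads $\left(\frac{\partial}{\partial t}-\Delta_{\omega_i}\right)\log\tr_{\tilde\alpha_i}\omega_i\leq C_0\,\tr_{\omega_i}\tilde\alpha_i$, where $C_0$ bounds the negative part of the bisectional curvature of $\tilde\alpha_i$ uniformly. This extra term is absorbed by enlarging the free constant in the test function $F_i=\log\tr_{\tilde\alpha_i}\omega_i+(n-1)\log t+\Lambda(\varphi_i-u_i)-\dot\varphi_i$: taking $\Lambda=C_0+1$ keeps the coefficient of $\tr_{\omega_i}\tilde\alpha_i$ negative at the interior maximum, and the remaining computation yields $\tr_{\tilde\alpha_i}\omega_i\leq L_4 t^{1-n}e^{\dot\varphi_i}$ and hence the uniform equivalence of $\omega_i(t)$ to $\omega_h$ on $(0,2]$.

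The rest is identical to the proof of Theorem~\ref{Main-Thm}. The estimates of \cite{ShermanWeinkove2012} give $C^\infty_{\mathrm{loc}}(M\times(0,2])$ subconvergence $\omega_i(t)\to\omega_\infty(t)$; the preserved lower scalar bound forces $\cR(\omega_\infty(t))\geq 0$, and comparing $\log\frac{\omega_\infty(t)^n}{\hat\omega^n}$ against a fixed Ricci-flat $\hat\omega$ through the strong maximum principle shows $\omega_\infty(t)$ is Ricci-flat --- here one stops at Ricci-flat rather than flat, which is precisely the weaker conclusion of the theorem. Uniqueness of the K\"ahler-Ricci flow makes it time-independent, producing $\omega_\infty$. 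Current convergence of $\omega_{i,0}$ (mass bounds from (ii) and Banach--Alaoglu), the identification of the limit via the $\mathbf{E}\to 0$ test-form estimate, the $L^{q/n}$ convergence of the volume ratios, and --- when $p=+\infty$ --- the distance estimate built from the curvature decay of \cite{ShermanWeinkove2012} and \cite{SimonTopping2016}, all carry over word for word. The main obstacle is the point isolated in the second paragraph: securing uniform curvature control of the Ricci-flat reference metrics, automatic from flatness in the torus case but here requiring the $C^\infty$ compactness of the Calabi--Yau family, which follows from Yau's estimates and the boundedness of the K\"ahler classes.
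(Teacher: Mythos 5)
Your proposal is correct and follows essentially the same route as the paper's own sketch: replace the references by Ricci-flat representatives with uniformly bounded geometry via Yau's theorem, and absorb the extra bisectional-curvature term in the parabolic Schwarz lemma by enlarging the constant $\Lambda$ in the test function \eqref{test1}. In fact you supply more detail than the paper does on why the Calabi--Yau family has uniform $C^\infty$ bounds (boundedness of the K\"ahler classes plus Yau's a-priori estimates), which is the one point the paper's sketch merely asserts.
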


\begin{proof}[Sketch of Proof]
Since the proof is almost identical to that of Theorem~\ref{Main-Thm}, we only point out the necessary modifications. Since $\a_i$ is uniformly equivalent to $\omega_h$, the proof of Lemma~\ref{CY-ref} shows that we may assume $u_i$ to be bounded uniformly and $\a_i$ is Ricci flat by Yau's Theorem \cite{Yau1978} with uniformly bounded geometry. Then the proof of Lemma~\ref{zeroth-estimate} and Lemma~\ref{time-zeroth-estimate} can be carried over. To obtain the trace estimate, i.e. Lemma~\ref{metric-estimate}, we modify the test function \eqref{test1} with $\Lambda$ chosen to be large depending only on the uniform curvature bound of $\a_i$. With this modification, we have the same estimates as in Lemma~\ref{metric-estimate}. The proof of convergence in Theorem~\ref{Main-Thm} can now be carried over directly.
\end{proof}

\end{document}